\newtheorem{thm}{Theorem}[section]
\newtheorem{lem}[thm]{Lemma}
\newtheorem{prop}[thm]{Proposition}
\theoremstyle{definition}
\theoremstyle{remark}
\newtheorem{rem}{Remark}[section]
\numberwithin{equation}{section}
\def\R{\mathbb R}
\def\H{\mathbb H}
\def\SS{\mathbb S}
\def\td{\tilde}
\def\ra{\rightarrow}
\def\pt{\partial}
\begin{document}
\title[Inverse mean curvature flow]{On inverse mean curvature flow in Schwarzschild space and Kottler space}
\author{Haizhong Li}
\address{Department of Mathematical Sciences, Tsinghua University, 100084, Beijing, P. R. China}
\email{hli@math.tsinghua.edu.cn}
\author{Yong Wei}
\address{Department of Mathematics, University College London, Gower Street, London, WC1E 6BT, United Kingdom}
\email{yong.wei@ucl.ac.uk}
\thanks{The first author was supported by NSFC No. 11271214, the second author was supported by Jason D Lotay's EPSRC grant EP/K010980/1.}
\subjclass[2010]{{53C44}, {53C42}}
\keywords{Inverse mean curvature flow; Schwarzschild space; Kottler space; Inequality}
\maketitle

\begin{abstract}
In this paper, we first study the behavior of inverse mean curvature flow in Schwarzschild manifold. We show that if the initial hypersurface $\Sigma$ is strictly mean convex and star-shaped, then the flow hypersurface $\Sigma_t$ converges to a large coordinate sphere as $t\ra \infty$ exponentially. We also describe an application of this convergence result. In the second part of this paper, we will analyse the inverse mean curvature flow in Kottler-Schwarzchild manifold. By deriving a lower bound for the mean curvature on the flow hypersurface independently of the initial mean curvature, we can use an approximation argument to show the global existence and regularity of the smooth inverse mean curvature flow for star-shaped and weakly mean convex initial hypersurface, which generalizes Huisken-Ilmanen's result \cite{HI08}.
\end{abstract}

\section{Introduction}\label{sec:intro}

In this paper, we analyse the inverse mean curvature flow in Schwarzschild manifold and Kottler-Schwarzchild manifold. The Schwarzschild manifold is an $n$-dimensinal ($n\geq 3$) manifold $M=[s_0,\infty)\times\SS^{n-1}$ equipped with the metric
\begin{equation}\label{schwarz}
    \bar{g}=\frac 1{1-2ms^{2-n}}ds^2+s^2g_{\SS^{n-1}},
\end{equation}
where $m>0$ is a constant, $s_0$ is the unique positive solution of $1-2ms_0^{2-n}=0$ and $g_{\SS^{n-1}}$ is the canonical round metric on the unit sphere $\SS^{n-1}$. The metric \eqref{schwarz} is asymptotically flat in the sense that the sectional curvature of $(M,\bar{g})$ approaches zero near infinity. By a change of variable, the metric \eqref{schwarz} can be written as the following warped product metric
\begin{equation}\label{Sch-model-3}
    \bar{g}=dr^2+\lambda^2(r)g_{\SS^{n-1}},
\end{equation}
where $\lambda(r): [0, \infty) \ra [s_0, \infty)$ satisfies
\begin{equation}\label{lambda'}
    \lambda'(r)=\sqrt{1-2m\lambda(r)^{2-n}}.
\end{equation}

Given a hypersurface $\Sigma$ in $(M,\bar{g})$, we say that it is strictly mean convex if its mean curvature $H$ is positive everywhere on $\Sigma$, and we say that $\Sigma$ is star-shaped if it can be represented as a radial graph over the sphere $\SS^{n-1}$, which is equivalent to that the support function $\chi=\langle \lambda(r) \pt_r,\nu\rangle>0$ everywhere on $\Sigma$. The inverse mean curvature flow of $\Sigma$  is a family of smooth embeddings $X:\Sigma\times [0,T)\ra (M^n,\bar{g})$ satisfying
\begin{equation}\label{icf}
    \pt_tX=\frac 1H\nu,
\end{equation}
where $\nu$, $H$ are the unit outward normal vector and mean curvature of the flow hypersurface $\Sigma_t=X(\Sigma,t)$ respectively.

Our first main result is following long-time existence and convergence result of the inverse mean curvature flow \eqref{icf} for strictly mean convex and star-shaped hypersurface in $(M, \bar{g})$.
\begin{thm}\label{mainthm-ICF}
The inverse mean curvature flow \eqref{icf} starting from a strictly mean convex and star-shaped hypersurface $\Sigma$ in the Schwarzschild manifold $(M, \bar{g})$ will exist for all time. The flow hypersurface $\Sigma_t$ converge to infinity while preserving strictly mean convex and star-shapedness. Moreover, there exist positive constants $\bar{\lambda}$ and $\beta'$ such that the induced metric on $\Sigma_t$ satisfies
\begin{equation}
  e^{-\frac{2t}{n-1}}g_{ij}\ra~\bar{\lambda}^2\sigma_{ij}
\end{equation}
exponentially fast and the second fundamental form $h_i^j$ satisfies
\begin{equation}
  |\frac{\lambda}{\lambda'} h_i^j-\delta_i^j|=O(e^{-\beta' t})
\end{equation}
as $t\ra+\infty$, where $\sigma_{ij}$ denotes the components of the round metric $g_{\SS^{n-1}}$. In other words, the flow $\Sigma_t$ converges to a large coordinate sphere as $t\ra\infty$.
\end{thm}

The convergence results of inverse mean curvature flow in Euclidean space $\R^n$ has been studied by Gerhardt \cite{Ge90} (see also Urbas \cite{urbas}), in the hyperbolic space $\H^n$ by Gerhardt \cite{Ge} and Ding \cite{Di}, and in the sphere $\SS^n$ by Gerhardt \cite{Ge-2013} and Makowski-Scheuer \cite{Mak-Sch}. Huisken-Ilmanen \cite{HI01} considered the weak solution of the inverse mean curvature flow in asymptotically flat manifold (in a level-set formulation), which includes the Schwarzchild manifold as a special case. Recently, Brendle-Hung-Wang \cite{BHW} investigated the inverse mean curvature flow in anti-de Sitter-Schwarzschild manifold which is asymptotically hyperbolic at the infinity, and applied the convergence result to prove a sharp Minkowski inequality for strictly mean convex and star-shaped hypersurface in anti-de Sitter-Schwarzschild manifold. Our Theorem \ref{mainthm-ICF} provides a convergence result of the inverse mean curvature flow in the Schwarzchild manifold which is asymptotically flat.

The inverse mean curvature flow is a useful tool and has many applications in geometric analysis and general relativity, see, e.g. \cite{BN,HI01,LimaG,LimaG-2,GWW2,GWWX,GL,Lee-Neves,LiWX,WX}. In \S \ref{sec:pf-ineqn}, we will describe an application of Theorem \ref{mainthm-ICF} and give a direct proof of the sharp Minkowski type inequality (due to Brendle-Hung-Wang \cite{BHW}) for strictly mean convex and star-shaped hypersurface in the Schwarzchild manifold. Moreover, by an approximation argument, we can show that such inequality also holds for weakly mean convex and star-shaped hypersurface. After the first version of this paper appeared on the arxiv, Scheuer \cite{Scheuer13} investigated the inverse mean curvature flow in warped cylinders of non-positive radial curvature, which can be viewed as a generalization of Theorem \ref{mainthm-ICF} and \cite[Theorem 1.1]{BHW}. However, he didn't give the application of the convergence result.

\vskip 2mm
In the second part of this paper, we turn to the inverse mean curvature flow in the Kottler space. The Kottler space, which is also called Kottler-Schwarzchild space, is an analogue of the Schwarzchild space in the context of asymptotically locally hyperbolic manifolds. Fix $\kappa=0, \pm 1$ and suppose that $(N^{n-1}, \hat{g})$ is a closed space form of constant sectional curvature $\kappa$. The Kottler space is an $n$-dimensional product manifold $M_{\kappa}=[s_{\kappa}, +\infty)\times N$ equipped with the warped product metric
\begin{equation}\label{kotter-metric}
  \bar{g}_{\kappa}=\frac 1{\kappa+s^2-2ms^{2-n}}ds^2+s^2\hat{g},
\end{equation}
where $s_{\kappa}$ is the largest positive root of $\kappa+s^2-2ms^{2-n}$. If $\kappa\geq 0$, then $m$ is always positive; if $\kappa=-1$, then $m$ can be negative and belongs to the interval $m\in [m_c,\infty)$, where $m_c=-(n-2)^{\frac{n-2}2}n^{-\frac n2}$. The Kottler space is asymptotically locally hyperbolic, as near the infinity the metric \eqref{kotter-metric} resembles the locally hyperbolic metric
\begin{equation}\label{local-hypr}
  b_{\kappa}=\frac 1{\kappa+s^2}ds^2+s^2\hat{g}.
\end{equation}
Note that when $\kappa=1$, $(N, \hat{g})$ is just the round sphere $\mathbb{S}^{n-1}$, the metric \eqref{local-hypr} is exactly the hyperbolic metric on $\R^+\times \SS^{n-1}$ and the metric \eqref{kotter-metric} is the anti-deSitter-Schwarzchild metric.  As in the Schwarzchild case, by a change of variables, the metric \eqref{kotter-metric} is equivalent to
\begin{equation}
  \bar{g}_{\kappa}=dr^2+\lambda_{\kappa}^2(r)\hat{g},
\end{equation}
where $\lambda_{\kappa}(r): [0, \infty)\ra [s_{\kappa}, \infty)$ satisfies
\begin{equation}
  \lambda_{\kappa}'(r)=\sqrt{\kappa+\lambda_{\kappa}(r)^2-2m\lambda_{\kappa}(r)^{2-n}}.
\end{equation}
The long-time existence and convergence result of smooth inverse mean curvature flow in the Kottler space $(M_{\kappa}, \bar{g}_{\kappa})$ with star-shaped and strictly mean convex initial hypersurface has been studied in \cite{BHW,GWWX}.

Recall that Huisken and Ilmanen \cite{HI08} derived a lower bound of the mean curvature for smooth star-shaped solution of inverse mean curvature flow in Euclidean space, independently of the initial mean curvature. Such estimate was combined with an approximation argument to prove  the global existence and regularity of the solution to the inverse mean curvature flow for star-shaped with weakly mean convex initial hypersurface. Our next result is a generalization of Huisken-Imanen's \cite{HI08} result to the inverse mean curvature flow in the Kottler space.
\begin{thm}\label{thm-H-geq0}
Let $X_0:\Sigma\ra (M_{\kappa}, \bar{g}_{\kappa})$ be a closed hypersurface of class $C^1$ with measurable, bounded and nonnegative weak mean curvature in the Kottler space with nonnegative mass $m$. Assume that $\Sigma_0=X_0(\Sigma)$ is star-shaped and the support function $\chi=\langle\lambda_{\kappa}(r)\pt_r,\nu\rangle$ satisfies
\begin{equation}\label{cond-star-sh}
  0<R_1\leq \chi\leq R_2
\end{equation}
where $R_1,R_2$ are positive constants. Then the inverse mean curvature flow \eqref{icf} in $(M_{\kappa}, \bar{g}_{\kappa})$ has a global smooth solution $X:\Sigma\times (0,+\infty)\ra (M_{\kappa}, \bar{g}_{\kappa})$ and $\Sigma_t=X(\Sigma,t)$ converges to $\Sigma_0$ uniformly in $C^0$ as $t\ra 0$.
\end{thm}

To prove Theorem \ref{thm-H-geq0}, the key ingredient is lemma \ref{lem-Hn-1} which established a lower bound of the mean curvature along inverse mean curvature flow. Precisely, we show that for a smooth solution of the inverse mean curvature flow which is initially star-shaped in the sense of \eqref{cond-star-sh}, there exists a uniform constant $0<c(n)<\infty$ such that the mean curvature has a lower bound
\begin{equation}\label{lowbd-H1}
   \min_{\Sigma_t}H\geq  c(n)R_1R_2^{-1}\min\{\frac 1{\sqrt{2}}t^{\frac 12},1\}.
\end{equation}
The proof of \eqref{lowbd-H1} is much simpler than that in \cite{HI08} for the Euclidean case. This is due to that the curvature of $(M_{\kappa}, \bar{g}_{\kappa})$ with nonnegative mass contributes enough negative terms in the evolution equations of related terms; while in $\R^n$ case, one need to apply the Sobolev inequality for hypersurfaces to explore the negative gradient terms to obtain $L^p$-estimate and then supremum estimate by Stampacchia iteration.  Note that the estimate \eqref{lowbd-H1} is independent on the initial mean curvature. Suppose $\Sigma_0$ is the initial hypersurface in Theorem \ref{thm-H-geq0} which has nonnegative weak mean curvature. Then $\Sigma_0$ can be approximated by a sequence of hypersurfaces $\Sigma_{0,\epsilon}, 0<\epsilon<\epsilon_0$ with positive mean curvature. Starting from each $\Sigma_{0,\epsilon}$, the inverse mean curvature flow has a global smooth solution in view of the estimate \eqref{lowbd-H1} and the higher regularity estimate (cf. \cite{BHW,GWWX}). Since the estimate \eqref{lowbd-H1} is independent of the initial mean curvature and is uniform in $\epsilon$ for each positive time $t$, by letting $\epsilon\ra 0$ we can obtain the desired global solution $\Sigma_t$ for all $0<t<\infty$ and it approaches $\Sigma_0$ uniformly as $t\ra 0$.

\begin{rem}
It's an interesting question whether the conclusion in Theorem \ref{thm-H-geq0} holds for inverse mean curvature flow in the Schwarzschild manifold, or in the Kottler-Schwarzchild manifold with negative mass. We will investigate this problem in the future.
\end{rem}

\vskip 2mm
The paper is organized as follows. In section \ref{section-2}, we collect some facts about the Schwarzschild manifold, Kottler-Schwarzchild manifold and star-shaped hypersurfaces;  In section \ref{sec:IMCF}, we establish the long-time existence and convergence result of the inverse mean curvature flow for star-shaped and strictly mean convex hypersurface in the Schwarzschild manifold, and describe an application. In section \ref{sec:H^n}, we first prove the lower bound \eqref{lowbd-H1} for the mean curvature along the inverse mean curvature flow in the Kottler space and then use an approximation argument to prove Theorem \ref{thm-H-geq0}.

\section{Preliminaries}\label{section-2}

In this section, we collect some facts about the Schwarzschild manifold,  Kottler-Schwarzchild manifold and star-shaped hypersurfaces.

\subsection{Schwarzchild manifold}
As in the introduction,  the Schwarzschild metric can be written as the following warped product metric on $\R^+\times\SS^{n-1}$
\begin{equation}
    \bar{g}=dr^2+\lambda^2(r)g_{\SS^{n-1}},
\end{equation}
where $\lambda(r):[0, \infty)\ra [s_0,\infty)$ satisfies
 \begin{align}\label{expre-lam}
    \lambda'(r)=\sqrt{1-2m\lambda(r)^{2-n}},\quad   \lambda''(r)=  m(n-2)\lambda(r)^{1-n}.
 \end{align}
Let $\theta=\{\theta^j\},j=1,\cdots,n-1$ be a coordinate system on $\SS^{n-1}$, $\pt_{\theta^i}$ be the corresponding coordinate vector field in $M$ and $\pt_r$ be the radial vector. By a direct calculation (see, e.g., \cite{Pe}), the curvature tensor of the Schwarzschild manifold $(M,\bar{g})$ has the following components
 \begin{align}
    \bar{R}_{ijkl}=&\frac{1-\lambda'^2}{\lambda^2}(\bar{g}_{ik}\bar{g}_{jl}-\bar{g}_{il}\bar{g}_{jk})=2m\lambda^{-n}(\bar{g}_{ik}\bar{g}_{jl}-\bar{g}_{il}\bar{g}_{jk}), \label{pre-cur1}\\
    \bar{R}_{irjr}=&-\frac{\lambda''}{\lambda}\bar{g}_{ij}=-m(n-2)\lambda^{-n}\bar{g}_{ij},\label{pre-cur2}
 \end{align}
 where $1\leq i,j,k,l\leq n-1$ and $\bar{g}_{ij}=\bar{g}(\pt_{\theta^i},\pt_{\theta^j})=\lambda^2g_{\SS^{n-1}}(\pt_{\theta^i},\pt_{\theta^j})=\lambda^2\sigma_{ij}$. Other components of the curvature tensor are equal to zero. By a further calculation, we have the Ricci curvature of $(M,\bar{g})$
 \begin{align}
    \overline{Ric}=&\left((n-2) \frac{1-\lambda'^2}{\lambda^2}-\frac{\lambda''}{\lambda}\right)\bar{g}-(n-2)\left(\frac{1-\lambda'^2}{\lambda^2}+\frac{\lambda''}{\lambda}\right)dr^2\nonumber\\
    =&m(n-2)\lambda^{-n}\bar{g}-mn(n-2)\lambda^{-n}dr^2\label{expre-Ric}
 \end{align}
 and the scalar curvature
 \begin{align}
    \bar{R}=(n-1)\left((n-2) \frac{1-\lambda'^2}{\lambda^2}-\frac{2\lambda''}{\lambda}\right)=0.
 \end{align}
We denote by $\bar{\nabla},\bar{\nabla}^2$ and $\bar{\Delta}$  the gradiant, Hessian and Laplacian operator on $(M,\bar{g})$. We can compute the Hessian of $\lambda'$:
 \begin{align}
   \bar{\nabla}^2\lambda'=&\frac{\lambda'\lambda''}{\lambda}\bar{g}+(\lambda'''-\frac{\lambda'\lambda''}{\lambda})dr^2\nonumber\\
   =&m(n-2)\lambda^{-n}\lambda'\bar{g}-mn(n-2)\lambda^{-n}\lambda'dr^2\label{hessian}
 \end{align}
 Thus we have
 \begin{equation}\label{lapalcian}
   \bar{\Delta}\lambda'=(n-1)\frac{\lambda'\lambda''}{\lambda}+\lambda'''=0.
 \end{equation}
Combining \eqref{expre-Ric},\eqref{hessian} and \eqref{lapalcian}, we conclude that $\lambda'$ satisfies the following static equation:
\begin{equation}\label{static}
    (\bar{\Delta}\lambda')\bar{g}-\bar{\nabla}^2\lambda'+\lambda'\overline{Ric}=0.
\end{equation}

\subsection{Kottler-Schwarzchild manifold}
Similarly, by a change of variables $r=r(s)$ with
\begin{equation*}
  r'(s)=\frac 1{\sqrt{\kappa+s^2-2ms^{2-n}}}, \quad r(s_{\kappa})=0,
\end{equation*}
the Kottler metric \eqref{kotter-metric} is equivalent to the following warped product metric
\begin{equation}
    \bar{g}_{\kappa}=dr^2+\lambda_{\kappa}^2(r)\hat{g},
\end{equation}
on $[0,+\infty)\times N$,  where $\lambda_{\kappa}(r):[0,\infty)\ra [s_{\kappa}, \infty)$ is the inverse of $r(s)$ and satisfies
\begin{align}
  \lambda_{\kappa}'(r)=&\sqrt{\kappa+\lambda_{\kappa}(r)^2-2m\lambda_{\kappa}(r)^{2-n}}, \label{lambda_k1}\\
  \lambda_{\kappa}''(r)=&\lambda_{\kappa}(r)+m(n-2)\lambda_{\kappa}^{1-n}(r)
\end{align}
Let $\theta=\{\theta^j\},j=1,\cdots,n-1$ be a coordinate system on $N$, $\pt_{\theta^i}$ be the corresponding coordinate vector field in $M_{\kappa}$ and $\pt_r$ be the radial vector. As before, the curvature tensor of $(M_{\kappa},\bar{g}_{\kappa})$ has the following components
 \begin{align}
    \bar{R}_{ijkl}=&\frac{1-\lambda_{\kappa}'^2}{\lambda_{\kappa}^2}(\bar{g}_{ik}\bar{g}_{jl}-\bar{g}_{il}\bar{g}_{jk})=(2m\lambda_{\kappa}^{-n}-1)(\bar{g}_{ik}\bar{g}_{jl}-\bar{g}_{il}\bar{g}_{jk})\label{cur1-kapa}\\
    \bar{R}_{irjr}=&-\frac{\lambda_{\kappa}''}{\lambda_{\kappa}}\bar{g}_{ij}=-(1+m(n-2)\lambda_{\kappa}^{-n})\bar{g}_{ij},\label{cur2-kappa}
 \end{align}
 where $1\leq i,j,k,l\leq n-1$ and $\bar{g}_{ij}=\bar{g}_{\kappa}(\pt_{\theta^i},\pt_{\theta^j})=\lambda^2\hat{g}(\pt_{\theta^i},\pt_{\theta^j})=\lambda^2\hat{g}_{ij}$. Other components of the curvature tensor are equal to zero. The Ricci curvature of $(M_{\kappa},\bar{g}_{\kappa})$ can be expressed as
 \begin{align}\label{expre-Ric-kappa}
    \overline{Ric}=&\left(m(n-2)\lambda_{\kappa}^{-n}-(n-1)\right)\bar{g}_{\kappa}-mn(n-2)\lambda_{\kappa}^{-n}dr^2
 \end{align}
 and the scalar curvature
 \begin{align}
    \bar{R}=(n-1)\left((n-2) \frac{1-\lambda_{\kappa}'^2}{\lambda_{\kappa}^2}-\frac{2\lambda_{\kappa}''}{\lambda_{\kappa}}\right)=-n(n-1).
 \end{align}
One can also check that $\lambda_{\kappa}'$ satisfies the static equation:
\begin{equation}
    (\bar{\Delta}\lambda_{\kappa}')\bar{g}_{\kappa}-\bar{\nabla}^2\lambda_{\kappa}'+\lambda_{\kappa}'\overline{Ric}=0.
\end{equation}

\subsection{Star-shaped hypersurface}\label{sec:2-3}
If $\Sigma$ is a smooth closed hypersurface in the Schwarzchild manifold $(M,\bar{g})$, we say that $\Sigma$ is star-shaped if the support function $\chi=\langle \lambda\pt_r,\nu\rangle>0$ on $\Sigma$, which implies that $\Sigma$ could be parameterized by a graph
 \begin{equation*}
    \Sigma=\{(r(\theta),\theta):\theta\in\SS^{n-1}\}
 \end{equation*}
 for a smooth function $r=r(\theta)$ on $\SS^{n-1}$. As in \cite{BHW,Di,Ge}, we define a function $\varphi$ on $\SS^{n-1}$ by $\varphi(\theta)=\Phi(r(\theta))$, where $\Phi(r)$ is a positive function satisfying $\Phi'(r)=1/{\lambda(r)}$. Define
\begin{equation*}
    v=\sqrt{1+|D\varphi|^2_{\SS^{n-1}}},
\end{equation*}
where $D$ denotes the Levi-Civita connection on $\SS^{n-1}$. Let $\theta=\{\theta^j\},j=1,\cdots,n-1$ be a coordinate system on $\SS^{n-1}$. The unit normal vector of this hypersurface could be written as
\begin{align}\label{normal-vf}
    \nu=\frac 1v(\pt_r-\frac{r^j}{\lambda^2}\pt_{\theta^j}).
\end{align}
We can express the metric and second fundamental form of $\Sigma$ as following
\begin{align}\label{expre-g}
    g_{ij}=\lambda^2(\sigma_{ij}+\varphi_i\varphi_j),\qquad h_{ij}=\frac{\lambda'}{v\lambda}g_{ij}-\frac{\lambda}{v}\varphi_{ij},
\end{align}
where $\varphi_i,\varphi_{ij}$ are covariant derivatives of $\varphi$ with respect to the metric $g_{\SS^{n-1}}$. After lifting the indice $j$, we have
\begin{equation}\label{expre-h}
    h_i^j=h_{ik}g^{kj}=\frac 1{v\lambda}\left(\lambda'\delta_i^j-\tilde{\sigma}^{jk}\varphi_{ki}\right),
\end{equation}
where $\tilde{\sigma}^{jk}=\sigma^{jk}-\frac{\varphi^j\varphi^k}{v^2}$ with $\varphi^j=\sigma^{jk}\varphi_k$. The mean curvature $H$ then has the form
\begin{equation}\label{expre-H1}
  H=\frac{(n-1)\lambda'-\tilde{\sigma}^{ij}\varphi_{ij}}{v\lambda}.
\end{equation}
The above definitions and formulas can also be similarly given for hypersurfaces in Kottler-Schwarzchild manifold $(M, \bar{g}_{\kappa})$.

\section{Inverse mean curvature flow in Schwarzchild manifold}\label{sec:IMCF}

The inverse mean curvature flow in the Schwarzschild manifold is a family $X:\Sigma\times [0,T)\ra(M,\bar{g})$ satisfying
\begin{equation}\label{IMCF}
    \pt_tX=\frac 1H\nu,
\end{equation}
where $\nu$ is the unit outward normal and $H$ is the mean curvature of $\Sigma_t=X(\Sigma,t)$. If the initial hypersurface is star-shaped and strictly mean convex, the short time existence result (see, e.g.,\cite{Ge-2006}) of \eqref{IMCF} implies the flow exists on a maximum time interval $[0,T)$. Thus it remains to study the long time behavior of the flow \eqref{IMCF}.

Let $\pt_i, i=1,2,\cdots,n-1$ be coordinate vector fields on $\Sigma_t$. Denote by $g_{ij}$ and $h_{ij}$ the components of the first and second fundamental form, by $H=g^{ij}h_{ij}$ the mean curvature and $|A|^2=h_{ik}h_{lj}g^{il}g^{jk}$ the squared norm of the second fundamental form, and by $d\mu_t$ the area element on $\Sigma_t$. We first collect the evolution equations for various geometric quantities under the inverse mean curvature flow.

\begin{lem}[Evolution equations]\label{lem-evolu}
Under the flow \eqref{IMCF}, we have
\begin{align}
  \pt_tg_{ij} =& 2H^{-1}h_{ij}, \label{evl-metric}\\
  \pt_td\mu_t=&d\mu_t,\label{evl-measure}\\
  \pt_t\nu= & \frac 1{H^2}\nabla H,\label{evl-nu}
\end{align}
\begin{align}
  \pt_th_i^j=&\frac{\Delta h_i^j}{H^2}+\frac{|A|^2}{H^2}h_i^j-\frac 2Hh_i^kh_k^j-\frac 2{H^3}\nabla_iH\nabla^jH-\frac 2H\bar{R}_{\nu i\nu k}g^{kj}\nonumber\\
  &\quad+\frac 2{H^2}g^{lj}g^{ks}h_k^m\bar{R}_{misl}+\frac 1{H^2}g^{lj}g^{ks}h_i^m\bar{R}_{mksl}+\frac 1{H^2}g^{lj}g^{ks}h_l^m\bar{R}_{mksi}\nonumber\\
  &\quad +\frac 1{H^2}\overline{Ric}(\nu,\nu)h_i^j+\frac 1{H^2}g^{lj}g^{ks}\left(\bar{\nabla}_k\bar{R}_{\nu isl}+\bar{\nabla}_l\bar{R}_{\nu ksi}\right)\label{evl-h-0}\\
  \pt_tH=&\frac{\Delta H}{H^2}-2\frac{|\nabla H|^2}{H^3}-\frac{|A|^2}H-\frac{\overline{Ric}(\nu,\nu)}H\label{evl-H}\\
  \pt_t\chi=&\frac 1{H^2}\Delta \chi+\frac{|A|^2}{H^2}\chi-\frac 1{H^2}\overline{Ric}(\nu,\pt_k)\langle\lambda\pt_r,\pt_j\rangle g^{kj}.\label{evl-chi}
\end{align}
where $\nabla$ and $\Delta$ are gradient and Laplacian operator with respect to the induced metric on the flow hypersurface $\Sigma_t$.
\end{lem}
\proof
The evolution equations for the metric, area element, unit normal and the second fundamental form can be calculated in a standard way as in\cite{Hu}, we omit the argument here. For the support function, we have
\begin{equation*}
  \pt_t\chi=\pt_t\langle\lambda\pt_r,\nu\rangle=\langle \bar{\nabla}_{\pt_t}(\lambda\pt_r),\nu\rangle+\langle \lambda\pt_r,\pt_t\nu\rangle=\frac{\lambda'}H+\frac 1{H^2}\langle\lambda\pt_r,\nabla H\rangle,
\end{equation*}
where we used the conformal property of the vector field $\lambda\pt_r$ (see \cite{B}) and \eqref{evl-nu}. On the other hand, by using the conformal property of $\lambda\pt_r$ again and the Codazzi equations, we have
\begin{equation*}
  \nabla_i\chi=\langle\bar{\nabla}_{\pt_i}(\lambda\pt_r),\nu\rangle+\langle\lambda\pt_r,\bar{\nabla}_{\pt_i}\nu\rangle=\langle \lambda\pt_r,h_i^k\pt_k\rangle
\end{equation*}
and
\begin{align*}
  \nabla_j\nabla_i\chi =&\lambda'h_{ij}-h_i^kh_{kj}\chi+\langle\lambda\pt_r,\nabla_jh_i^k\pt_k\rangle\\
  =&\lambda'h_{ij}-h_i^kh_{kj}\chi+\langle\lambda\pt_r,\nabla^kh_{ij}\pt_k\rangle++\langle\lambda\pt_r,\overline{R}_{\nu ilj}g^{lk}\pt_k\rangle.
\end{align*}
Thus we obtain
\begin{equation*}
  \Delta\chi=\lambda'H-|A|^2\chi+\langle\lambda\pt_r,\nabla H\rangle+\overline{Ric}(\nu,\pt_l)\langle\lambda\pt_r,\pt_k\rangle g^{kl}.
\end{equation*}
Then \eqref{evl-chi} follows from combining the above equations.
\endproof
\begin{rem}\label{rem-3-1}
The evolution equations \eqref{evl-metric}--\eqref{evl-H} in fact hold for inverse mean curvature flow in general Riemannian manifold; the evolution equation \eqref{evl-chi} also holds for inverse mean curvature flow in Kottler space.
\end{rem}

We could use the evolution equation \eqref{evl-chi} of the support function to show that under the inverse mean curvature flow, the evolved hypersurface $\Sigma_t$ remains star-shaped.
\begin{lem}\label{lem-star-shape}
If the initial hypersurface satisfies
\begin{equation*}
  0<R_1\leq \chi\leq R_2,
\end{equation*}
then the solution of the inverse mean curvature flow \eqref{IMCF} satisfies
\begin{equation}\label{lem-star-shape-2}
  R_1e^{\frac t{n-1}}\leq \chi\leq \lambda(r)\leq R_2e^{\frac t{n-1}}.
\end{equation}
\end{lem}
\proof
From the expression $\eqref{expre-Ric}$ for Ricci curvature of the Schwarzschild manifold, we have
 \begin{align*}
\overline{Ric}(\nu,\pt_k)\langle\lambda\pt_r,\pt_j\rangle g^{kj}=&-mn(n-2)\lambda^{-n}\langle\pt_r,\nu\rangle\langle\pt_r,\pt_k\rangle\langle\lambda\pt_r,\pt_j\rangle g^{kj}\\
=&-mn(n-2)\lambda^{-n}\chi\langle\pt_r,\pt_k\rangle\langle\pt_r,\pt_j\rangle g^{kj}\\
=& -mn(n-2)\lambda^{-n}\chi|\pt_r^{\top}|^2,
 \end{align*}
where
\begin{equation*}
  |\pt_r^{\top}|^2=\sum_{k,j=1}^{n-1}\langle\pt_r,\pt_k\rangle\langle\pt_r,\pt_j\rangle g^{kj}
\end{equation*}
is squared norm of the tangential part $\pt_r^{\top}$ of the radial vector $\pt_r$. Substituting the above into the evolution equation \eqref{evl-chi} of $\chi$, we have
\begin{equation}\label{chi-evl1}
  \pt_t\chi=\frac 1{H^2}\Delta \chi+\frac{|A|^2}{H^2}\chi+\frac 1{H^2}mn(n-2)\lambda^{-n}|\pt_r^{\top}|^2\chi,
\end{equation}
Since $\chi>0$ on the initial hypersurface $\Sigma_0$, in view of the inequality $|A|^2\geq H^2/{(n-1)}$, $m>0$ and using the parabolic maximum principle, we conclude that
\begin{equation}\label{chi-lowerbd}
  \chi\geq e^{\frac t{n-1}}\min_{\Sigma_0}\chi=e^{\frac t{n-1}}R_1
\end{equation}
which implies the star-shapedness of $\Sigma_t$.

On the other hand, since $\lambda\pt_r$ is a conformal vector field, we have
\begin{equation*}
  \pt_t\lambda^2=2\pt_t\langle \lambda\pt_r, \lambda\pt_r\rangle=2\langle \lambda\pt_r, \frac {\lambda'}H\nu\rangle=2\frac {\lambda'}H\chi.
\end{equation*}
Note that at the point of a hypersurface in $(M, \bar{g})$ most distant from the origin, we have $\chi=\lambda$ and $H\geq (n-1)\lambda'/\lambda$ (see \cite[Lemma 2.1]{LiWX-2}). Then
\begin{equation*}
  \frac d{dt}\max_{\Sigma_t}\lambda(r)^2\leq \frac 2{n-1}\max_{\Sigma_t}\lambda(r)^2,
\end{equation*}
which implies that
\begin{equation}\label{chi-uperbd}
  \chi\leq \lambda(r)\leq R_2e^{\frac t{n-1}}.
\end{equation}
\endproof
\begin{rem}\label{rem-3-2}
The conclusion in lemma \ref{lem-star-shape} is also true for inverse mean curvature flow in Kottler space. In fact, at the point $x\in \Sigma_t$ where $\chi(x,t)=\min_{\Sigma_t}\chi$, we have $\pt_r\perp T_x\Sigma_t$ and therefore $|\pt_r^{\top}|=0$. Then we can still derive the lower bound \eqref{chi-lowerbd} from \eqref{chi-evl1}. The proof of the upper bound for $\chi$ is similar.
\end{rem}

The flow equation \eqref{IMCF} is often called the parametric form of the flow. Since each $\Sigma_t$ is star-shaped, it can be represented as a graph
\begin{equation*}
    \Sigma_t=\{(r(\theta,t),\theta):\theta\in\SS^{n-1}\}
\end{equation*}
Then the flow equation is equivalent to the following non-parametric form of the flow (cf.\cite{BHW,Di,Ge})
\begin{equation}\label{icf-non-pa}
    \frac{\pt r}{\pt t}=\frac vH.
\end{equation}
The speed function $v/H$ depends on $r,Dr,D^2r$. It is easy to see that the flow equation \eqref{icf-non-pa} is parabolic. In deed, as in \S \ref{sec:2-3} we define $\varphi(\theta,t)=\Phi(r(\theta,t))$ with $\Phi(r)$ is a positive function satisfying $\Phi'(r)=1/{\lambda(r)}$. Then
\begin{equation*}
  \varphi_i=\frac{r_i}{\lambda},\quad \varphi_{ij}=\frac{r_{ij}}{\lambda}-\frac{\lambda'r_ir_j}{\lambda^2},
  \end{equation*}
and we have
\begin{equation}\label{expre-H2}
  H=\frac{(n-1)\lambda'}{v\lambda}-\frac{\tilde{\sigma}^{ij}}{v\lambda^2}(r_{ij}-\frac{\lambda'r_ir_j}{\lambda}).
\end{equation}
Thus
\begin{equation*}
   \frac{\pt}{\pt r_{ij}}(\frac vH)=\frac{\tilde{\sigma}^{ij}}{H^2\lambda^2}
 \end{equation*}
which is nonnegative definite and therefore \eqref{icf-non-pa} is parabolic.

\begin{lem}\label{lem-lambda}
Let $r_1,r_2$ be constants such that
\begin{equation*}
    r_1~<r(\theta)<r_2
\end{equation*}
holds on the initial hypersurface $\Sigma_0$. Then on $\Sigma_t$ we have the estimate
\begin{equation}\label{chi-bd}
    \lambda(r_1)e^{\frac t{n-1}}<\lambda(r(\theta,t))<\lambda(r_2)e^{\frac t{n-1}},\qquad \forall~t\in[0,T).
\end{equation}
\end{lem}
\proof
Let $S_{r_i}, i=1,2$ be coordinate spheres $\{r_i\}\times\SS^{n-1}$. We solve the inverse mean curvature flows with initial hypersurface $S_{r_i}$ respectively. If the initial hypersurface is a coordinate sphere, the inverse mean curvature flow becomes a scalar flow:
\begin{equation*}
    \frac{dr}{dt}=\frac 1H=\frac{\lambda}{(n-1)\lambda'},
\end{equation*}
where we used that the principal curvatures of a coordinate sphere are ${\lambda'}/{\lambda}$. Then
\begin{equation*}
    \frac{d\lambda}{dt}=\frac{\lambda}{n-1}.
\end{equation*}
From this we deduce that $\lambda(r_i(t))=\lambda(r_i(0))e^{\frac t{n-1}}$. By the parabolic maximum principle, we have
\begin{equation*}
    r_1(t)<r(\theta,t)<r_2(t),\qquad t\in [0,T).
\end{equation*}
Since $\lambda'>0$, we also have
\begin{equation*}
    \lambda(r_1(t))<\lambda(r(\theta,t))<\lambda(r_2(t)).
\end{equation*}
The assertion follows from the above inequality.
\endproof

\subsection{Proof of Theorem \ref{mainthm-ICF}}
We now give the proof of Theorem \ref{mainthm-ICF}. In this subsection, we will assume that $(M, \bar{g})$ is the standard Schwarzchild manifold. We first estimate the upper bound for the mean curvature of $\Sigma_t$.
\begin{lem}\label{lem-H-u}
There is a constant $C_1>0$ such that $He^{\frac t{n-1}}\leq C_1$, where $C_1$ depends on $m,n,r_1$ and $\max_{\Sigma_0}H$.
\end{lem}
\proof
From the evolution equation \eqref{evl-H} of $H$, we have
\begin{align}\label{H^2}
    \pt_tH^2=&-2H\Delta\frac 1H-2|A|^2-2\overline{Ric}(\nu,\nu).
\end{align}
Using \eqref{expre-Ric}, we know that $\overline{Ric}(\nu,\nu)=O(\lambda^{-n})$, which implies that $|\overline{Ric}(\nu,\nu)|\leq C\lambda(r_1)^{-n}e^{-\frac{nt}{n-1}}$ by Lemma \ref{lem-lambda}, where $C$ depends only on $m,n$. Then \eqref{H^2}, together with the inequality $|A|^2\geq H^2/(n-1)$ yields that
\begin{align*}
    \frac d{dt}H^2_{\max}\leq&-\frac{2}{n-1}H^2_{\max}+C\lambda(r_1)^{-n}e^{-\frac{nt}{n-1}}.
\end{align*}
From this, we can obtain that
\begin{equation*}
  e^{\frac {2t}{n-1}}H^2(x,t)\leq \max_{\Sigma}H^2(x,0)+\frac{n-1}{n-2}C\lambda(r_1)^{-n},
\end{equation*}
which gives the assertion of the lemma.
\endproof

To estimate the lower bound of $H$, by the definition of $\varphi$ we have
\begin{equation}\label{evl-varphi}
    \frac{\pt\varphi}{\pt t}=\frac v{\lambda H}=\frac 1{H\chi}=\frac 1F,
\end{equation}
where the function
\begin{align}\label{H-chi}
    F=H\chi=\frac{(n-1)\lambda'-\td{\sigma}^{ij}\varphi_{ij}}{v^2}.
\end{align}
The flow equation \eqref{evl-varphi} is also parabolic.

\begin{lem}\label{lem-H-l}
There is a constant $C_2>0$ such that $He^{\frac t{n-1}}\geq C_2$, where $C_2$ depends on $r_2$ and $\min_{\Sigma_0}(\lambda H/v)$.
\end{lem}
\proof
If we differentiate \eqref{evl-varphi} with respect to $t$, we obtain
\begin{align*}
    \pt_t(\pt_t\varphi)=&\frac{\tilde{\sigma}^{ij}}{v^2F^2}(\pt_t\varphi)_{ij}-\frac 1{F^2}\frac{\pt F}{\pt \varphi_i}(\pt_t\varphi)_i-\frac{(n-1)\lambda\lambda''}{\lambda^2H^2}\pt_t\varphi.
\end{align*}
Then the maximal principle implies that
\begin{align*}
    \frac d{dt}(\pt_t\varphi)_{\max}\leq&-\frac{(n-1)\lambda''}{\lambda H^2}(\pt_t\varphi)_{\max}.
\end{align*}
So from \eqref{expre-lam} we have
\begin{align}\label{var-lowerbd}
    \frac d{dt}(\pt_t\varphi)_{\max}\leq&0.
\end{align}
Noting that $\pt_t\varphi=v/(\lambda H)$ and $v\geq 1$,  we obtain that
\begin{align*}
    \lambda H\geq \min_{\Sigma_0}\frac{\lambda H}{v}>0,
\end{align*}
The assertion follows from the above inequality and Lemma \ref{lem-lambda}.
\endproof

For the first spacial derivatives of $\varphi$, we have the following estimate.
\begin{lem}\label{lem-varphi}
There is a constant $\beta>0$ such that $|D\varphi|_{\SS^{n-1}}\leq O(e^{-\beta t})$.
\end{lem}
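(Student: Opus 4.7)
The plan is to establish exponential decay of $w := \tfrac{1}{2}|D\varphi|^2_{\SS^{n-1}}$ by computing its evolution equation under the non-parametric flow \eqref{evl-varphi} and applying the parabolic maximum principle at a spatial maximum. This is the analog in the Schwarzschild setting of the argument of Brendle--Hung--Wang in the anti-de Sitter--Schwarzschild case.

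Starting from $\partial_t\varphi = 1/F$ with $F = v^{-2}\bigl((n-1)\lambda' - \tilde\sigma^{ij}\varphi_{ij}\bigr)$, spatial differentiation gives $\partial_t\varphi_k = -F_k/F^2$, hence
\begin{equation*}
\partial_t w \;=\; \sigma^{jk}\varphi_j\,\partial_t\varphi_k \;=\; -\frac{\sigma^{jk}\varphi_j F_k}{F^2}.
\end{equation*}
Expanding $F_k$ requires: (i) the chain rule $D_k\lambda' = \lambda''\lambda\,\varphi_k = -m(n-2)\lambda^{2-n}\varphi_k$ from \eqref{expre-lam}; (ii) the product rule applied to $\tilde\sigma^{ij}\varphi_{ij}$, including $D_k(\varphi^i\varphi^j/v^2)$; and (iii) the Ricci identity on $(\SS^{n-1},\sigma)$ to commute $\varphi_{ijk} = \varphi_{kij} + \sigma_{jk}\varphi_i - \sigma_{ik}\varphi_j$.

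After collecting terms, I expect an evolution equation of the form
\begin{equation*}
\partial_t w \;=\; \frac{\tilde\sigma^{ij}}{F^2 v^2}\,w_{ij} + b^k w_k - 2\gamma(t)\,w + E(t),
\end{equation*}
where $\gamma(t)\geq \beta > 0$ for all large $t$ and $E(t) = O(\lambda^{2-n}) = O(e^{-(n-2)t/(n-1)})$ is the Schwarzschild-induced error. The positive lower bound on $\gamma(t)$ comes from Lemmas \ref{lem-H-u} and \ref{lem-H-l}, which together give $\lambda H = Fv$ bounded between positive constants. At a spatial maximum of $w(\cdot,t)$ the $w_{ij}$ and $w_k$ contributions are non-positive, so
\begin{equation*}
\frac{d}{dt} w_{\max}(t) \;\leq\; -2\beta\,w_{\max}(t) + C e^{-(n-2)t/(n-1)},
\end{equation*}
and ODE comparison yields $w_{\max}(t) \leq C'e^{-2\beta' t}$ for some $\beta'>0$, giving $|D\varphi|_{\SS^{n-1}} \leq O(e^{-\beta' t})$.

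The main obstacle is the algebraic bookkeeping that identifies the sign of $\gamma(t)$: this is morally a Schwarzschild perturbation of Gerhardt's classical computation for star-shaped IMCF in $\R^n$, where the Schwarzschild corrections appear only through $E$ because $\lambda\to\infty$ along the flow by Lemma \ref{lem-lambda}. A secondary technical point is that a uniform $C^1$ bound $v\leq C$ must be in hand in order to convert the $v$-dependent coefficient of $w$ into an honest negative constant; this is typically obtained by an analogous maximum-principle argument on $w$ (possibly run jointly with the exponential decay).
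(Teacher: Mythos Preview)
Your approach is the same as the paper's: derive the evolution of $\omega=\tfrac12|D\varphi|^2_{\SS^{n-1}}$ from \eqref{evl-varphi} and apply the maximum principle. However, the actual computation is cleaner than you anticipate, and both of your flagged obstacles dissolve. Carrying it out (as in Lemma~8 of \cite{BHW}) yields
\[
\partial_t\omega=\frac{\tilde\sigma^{ij}}{v^2F^2}\omega_{ij}-\frac{1}{F^2}\frac{\partial F}{\partial\varphi_i}\omega_i-\frac{\tilde\sigma^{ij}}{v^2F^2}\sigma^{kl}\varphi_{ik}\varphi_{jl}-\frac{2(n-2)}{\lambda^2H^2}\omega-\frac{2(n-1)\lambda''}{\lambda H^2}\omega.
\]
There is no separate error term $E(t)$: the Schwarzschild contribution is the last term, and since $\lambda''=m(n-2)\lambda^{1-n}>0$ by \eqref{expre-lam} it has a favorable sign and may simply be dropped. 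The zeroth-order coefficient $-\tfrac{2(n-2)}{\lambda^2H^2}$ does not involve $v$, so your circularity concern about needing a prior $C^1$ bound does not arise. To bound $\lambda H$ above (hence this coefficient below by a positive $2\beta$) only Lemma~\ref{lem-lambda} and the \emph{upper} bound Lemma~\ref{lem-H-u} are needed; Lemma~\ref{lem-H-l} plays no role here. One then gets $\tfrac{d}{dt}\omega_{\max}\leq-2\beta\,\omega_{\max}$ directly, with no forcing term.
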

\proof
Let $\omega=\frac 12|D\varphi|^2_{\SS^{n-1}}$. We can compute as lemma 8 in \cite{BHW} to obtain the evolution of $\omega$:
\begin{align*}
    \pt_t\omega=&\frac{\tilde{\sigma}^{ij}}{v^2F^2}\omega_{ij}-\frac 1{F^2}\frac{\pt F}{\pt \varphi_i}\omega_i-\frac{\tilde{\sigma}^{ij}}{v^2F^2}\sigma^{kl}\varphi_{ik}\varphi_{jl}\\
    &\quad -\frac{2(n-2)}{\lambda^2H^2}\omega-\frac{2(n-1)\lambda''}{\lambda H^2}\omega.
\end{align*}
By Lemma \ref{lem-lambda} and Lemma \ref{lem-H-u}, there is a constant $\beta>0$ such that
\begin{equation*}
    \frac{(n-2)}{\lambda^2H^2}\geq\beta.
\end{equation*}
So from \eqref{expre-lam} we have
\begin{align*}
    \frac d{dt}\omega_{\max}\leq-2\beta~\omega_{\max},
\end{align*}
which implies the Lemma.
\endproof

Next, we will estimate the second fundamental form of $\Sigma_t$. We define the tensor $\eta$ by (as in \cite{HI08})
\begin{equation*}
  \eta_i^j=Hh_i^j.
\end{equation*}
Combining the evolution equations \eqref{evl-h-0},\eqref{evl-H}, and using the expression \eqref{pre-cur1}--\eqref{pre-cur2} for the curvature tensor of the Schwarzchild metric and lemma \ref{lem-lambda}, we have
\begin{align}
  \pt_t\eta_i^j =&\frac{1}{H^2}\Delta \eta_i^j-\frac 2{H^3}\nabla^kH\nabla_k\eta_i^j-\frac 2{H^2}\nabla^jH\nabla_iH \nonumber\\
   & -\frac 2{H^2}\eta_i^k\eta_k^j+\frac{|\eta|}{H^2}O(e^{-\frac{nt}{n-1}})+o(e^{-\frac{n+1}{n-1}t}).
\end{align}
If on the time interval considered we have an uniform bound $H_0\leq H\leq H_1$ for the mean curvature, by Hamilton's maximum principle for parabolic system \cite{Ha}, we can bound the largest eigenvalue $\mu_{n-1}$ of $\eta$ above by the solution of the  following ODE
\begin{equation*}
  \frac d{dt}\varphi=-\frac 2{H_1^2}\varphi^2+\frac{\varphi}{H_0^2}O(e^{-\frac{nt}{n-1}})+o(e^{-\frac{n+1}{n-1}t}).
\end{equation*}
So that $\mu_{n-1}$ and then the largest principal curvature $\kappa_{n-1}$ of $\Sigma_t$ have a uniform upper bound depending on $H_1$ and $H_0$. Since the mean curvature is bounded from below, it follows that the full second fundamental form is bounded by
\begin{equation}\label{bound-A}
  |A|\leq C(n,H_0,H_1).
\end{equation}
In view of the mean curvature estimate in Lemma \ref{lem-H-u} and Lemma \ref{lem-H-l}, we have the long time existence of the inverse mean curvature flow.
\begin{prop}
The solution of the inverse mean curvature flow is defined on $[0,\infty)$.
\end{prop}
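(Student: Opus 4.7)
The plan is to argue by a standard continuation/contradiction scheme: assume the maximal existence time $T$ is finite, and use the a priori estimates collected in Lemmas \ref{lem-lambda}--\ref{lem-varphi} together with the second fundamental form bound \eqref{bound-A} to obtain uniform $C^{k,\alpha}$ control of the graph function $r(\theta,t)$ on $[0,T)$, which contradicts the maximality of $T$. Throughout, I work with the non-parametric form \eqref{icf-non-pa}, viewed as a fully nonlinear scalar parabolic equation on $\mathbb{S}^{n-1}$ for $r$ (or equivalently for $\varphi$).

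First I would record that on any compact subinterval $[0,T]\subset[0,\infty)$, Lemma \ref{lem-lambda} provides uniform two-sided bounds $0 < c \le \lambda(r(\theta,t)) \le C$, hence uniform $C^0$ bounds on $r$. Lemma \ref{lem-varphi} then gives $|D\varphi|_{\mathbb{S}^{n-1}} \le C$, so $v$ is uniformly bounded above and below, and consequently $|Dr| = \lambda |D\varphi|$ is also under control; this is the $C^1$ estimate. The $C^2$ estimate follows from \eqref{bound-A}: combining the uniform bound $|A|\le C$ with the formula \eqref{expre-h} for $h_i^j$ and the already established $C^1$ bounds on $\varphi$ gives uniform bounds on $\tilde{\sigma}^{jk}\varphi_{ki}$, and hence on the covariant Hessian $D^2 \varphi$ and on $D^2 r$.

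Next I would note that Lemma \ref{lem-H-l} provides a strictly positive lower bound on $H$ on $[0,T]$, while Lemma \ref{lem-H-u} gives an upper bound. In the graph equation \eqref{icf-non-pa} the linearisation has leading symbol $\tilde{\sigma}^{ij}/(H^2\lambda^2)$ (from the earlier computation showing parabolicity), and the above estimates yield uniform two-sided bounds on this symbol. Thus \eqref{icf-non-pa} is uniformly parabolic on $\mathbb{S}^{n-1}\times[0,T]$ with uniformly bounded coefficients and uniformly bounded $C^2$ data. By the Krylov--Safonov estimate (applied to the linearised equation, or by Evans--Krylov theory for concave/convex fully nonlinear equations if needed), we obtain $C^{2,\alpha}$ bounds for $r$ uniform on $[0,T]$. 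Standard parabolic Schauder theory then bootstraps to uniform $C^{k,\alpha}$ bounds for every $k$.

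With uniform smooth bounds on $r$ up to $t=T$, the short-time existence theorem applied at time $T$ allows the flow to be continued past $T$, contradicting the maximality of the existence interval. Hence $T=\infty$. The only mildly delicate point is verifying that the Krylov--Safonov/Schauder machinery really applies to \eqref{icf-non-pa}, but since the equation is strictly parabolic with smooth coefficients on $\mathbb{S}^{n-1}$ and all derivatives of the metric data are controlled, this is routine; the substantive analytic work was already done in obtaining the uniform bounds on $H$, $|A|$ and $|D\varphi|$ in the preceding lemmas.
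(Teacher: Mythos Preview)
Your argument is correct and follows essentially the same route as the paper: assume $T<\infty$, use the bounds on $H$ from Lemmas \ref{lem-H-u} and \ref{lem-H-l} together with the second fundamental form estimate \eqref{bound-A}, invoke Krylov's regularity theory to get uniform higher-order control, and then extend past $T$ via short-time existence. The paper's proof is simply a terser version of what you wrote; your additional remarks about the $C^0$, $C^1$, $C^2$ steps and the Schauder bootstrap just make explicit what the paper leaves implicit in the phrase ``the regularity results of Krylov.''
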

\proof
Let $[0,T)$ be the maximum time interval of existence for the smooth solution of the inverse mean curvature flow. If $T<\infty$, then Lemma \ref{lem-H-u} and Lemma \ref{lem-H-l} imply that
\begin{equation*}
 C_2e^{-\frac T{n-1}} \leq H\leq C_1.
\end{equation*}
From \eqref{bound-A}, we know that the full second fundamental form of $\Sigma_t$ is uniformly bounded as $t\ra T$. Then the regularity results of Krylov \cite{Kry} and the short time existence theorem imply that we can extend the solution smoothly beyond $T$,  contradicting with the maximum of $T$. So we conclude that $T$ must be $\infty$.
\endproof

We now finish the proof of Theorem \ref{mainthm-ICF}.

\begin{proof}[Proof of Theorem \ref{mainthm-ICF}]
It remains to show that the solution $\Sigma_t$ converges to a large coordinate sphere as $t\ra\infty$. Let us define
\begin{equation}
  \tilde{\lambda}=\lambda e^{-\frac t{n-1}}.
\end{equation}
Then $\tilde{\lambda}$ satisfies
\begin{equation}\label{evl-td-lambda}
  \pt_t\tilde{\lambda}=\frac{\lambda'v}{H}e^{-\frac t{n-1}}-\frac 1{n-1}\tilde{\lambda}\quad (:=\tilde{F}),
\end{equation}
where we denote the right hand side of \eqref{evl-td-lambda} by $\tilde{F}$. By lemma \ref{lem-lambda}, the family $\tilde{\lambda}(\cdot,t)$ is uniformly bounded. By lemma \ref{lem-H-u} and lemma \ref{lem-H-l}, $|\pt_t\tilde{\lambda}|$ is also uniformly bounded. Noting that
\begin{equation}\label{lambda-varphi}
  \tilde{\lambda}_{ij}=(\lambda''\lambda^2+\lambda\lambda'^2)e^{-\frac t{n-1}}\varphi_i\varphi_j+\lambda\lambda'e^{-\frac t{n-1}}\varphi_{ij}
\end{equation}
and the expression \eqref{expre-H1} of $H$, we deduce that
\begin{equation*}
  \frac{\pt\tilde{F}}{\pt\tilde{\lambda}_{ij}}=\frac{\tilde{\sigma}^{ij}}{H^2\lambda^2}
\end{equation*}
which is positive definite and therefore \eqref{evl-td-lambda} is parabolic. Moreover, from lemma \ref{lem-H-u}, lemma \ref{lem-H-l} and lemma \ref{lem-varphi}, we conclude that \eqref{evl-td-lambda} is uniformly parabolic.

By lemma \ref{lem-varphi}, $D\tilde{\lambda}$ decays exponentially fast:
\begin{equation}
  D\tilde{\lambda}=D\lambda e^{-\frac t{n-1}}=\lambda'\lambda e^{-\frac t{n-1}}D\varphi=O(e^{-\beta t}).
\end{equation}
Thus $\tilde{\lambda}$ converges to a positive constant $\bar{\lambda}$ uniformly. From the regularity estimate of Krylov \cite[\S 5.5]{Kry}, the second derivatives of $\tilde{\lambda}$ are uniformly bounded in $C^{0,\alpha}$. Using the interpolation theorem we deduce that $D^2\tilde{\lambda}$ also decays exponentially fast.
In view of \eqref{lambda-varphi} and lemma \ref{lem-lambda}, lemma \ref{lem-varphi}, we have $|D^2\varphi|=O(e^{-\tilde{\beta} t})$ for some constant $\tilde{\beta}>0$.

By \eqref{expre-g} and lemma \ref{lem-varphi}, the metric of $\Sigma_t$ satisfies
\begin{equation}
  e^{-\frac{2t}{n-1}}g_{ij}\ra~\bar{\lambda}^2\sigma_{ij}
\end{equation}
exponentially fast.  From the expression \eqref{expre-h} of $h_i^j$, we have
\begin{equation}
  |\frac{\lambda}{\lambda'} h_i^j-\delta_i^j|=(\frac {1}{v}-1)\delta_i^j-\frac{1}{v\lambda'}\tilde{\sigma}^{jk}\varphi_{ki}=O(e^{-\beta' t})
\end{equation}
for a positive constant $\beta'=\min\{2\beta,\tilde{\beta}\}$. This implies that $\Sigma_t$ converges to a large coordinate sphere exponentially as $t\ra\infty$ and finishes the proof of Theorem \ref{mainthm-ICF}.
\end{proof}
\subsection{Application}\label{sec:pf-ineqn}
In this subsection, we apply Theorem \ref{mainthm-ICF} to reprove the Minkowski type inequality, which was obtained by Brendle-Hung-Wang \cite{BHW} as the limit case of their inequality for strictly mean convex and star-shaped hypersurface in anti-de Sitter-Schwarzschild manifold. Although the argument is similar, the proof we give here is also interesting as the long-time behaviors of the inverse mean curvature flow in Schwarzschild and anti-de Sitter-Schwarzschild manifold are different, which makes the proof relatively simpler.

\begin{thm}[\cite{BHW}]\label{main-thm}
Let $\Sigma$ be a strictly mean convex and star-shaped closed hypersurface in the Schwarzchild manifold $(M,\bar{g})$. Then
\begin{equation}\label{main-inequ}
    \int_{\Sigma}fH d\mu\geq (n-1)\omega_{n-1}\left((\frac{|\Sigma|}{\omega_{n-1}})^{\frac{n-2}{n-1}}-2m\right),
\end{equation}
where $f(r)=\lambda'(r)$, $\omega_{n-1}$ is the area of the unit sphere $\SS^{n-1}\subset\R^n$ and $|\Sigma|$ is the area of $\Sigma$. Moreover, equality holds in \eqref{main-inequ} if and only if $\Sigma$ is a coordinate sphere $\{s\}\times\SS^{n-1}$.
\end{thm}
Recall that the boundary $\pt M=\{s_0\}\times\SS^{n-1}$ is called the horizon of the Schwarzchild manifold $(M,\bar{g})$, its area is equal to $|\pt M|=s_0^{n-1}\omega_{n-1}$. Since $s_0$ is the unique positive solution of $1-2ms_0^{2-n}=0$, we have $2m=({|\pt M|}/{\omega_{n-1}})^{\frac{n-2}{n-1}}$. Therefore \eqref{main-inequ} is equivalent to the following inequality
\begin{equation}\label{main-inequ2}
    \int_{\Sigma}fH d\mu\geq (n-1)\omega_{n-1}^{\frac 1{n-1}}\left(|\Sigma|^{\frac{n-2}{n-1}}-|\pt M|^{\frac{n-2}{n-1}}\right).
\end{equation}
The classical Minkowski inequality for convex hypersurface $\Sigma$ in $\R^n$ states that
\begin{equation}\label{mink-R}
  \int_{\Sigma}H d\mu\geq (n-1)\omega_{n-1}^{\frac 1{n-1}}|\Sigma|^{\frac{n-2}{n-1}}.
\end{equation}
This was generalized by Guan and Li \cite{GL} to weakly mean convex and star-shaped hypersurface using the inverse mean curvature flow. By letting $m\ra 0$, the Schwarchild metric reduces to the Euclidean metric $\bar{g}=ds^2+s^2g_{\SS^{n-1}}$ and the potential $f$ becomes $f=1$. Thus as a limit case Theorem \ref{main-thm} recovers the Minkowski inequality \eqref{mink-R} for strictly mean convex and star-shaped hypersurface $\Sigma$ in $\R^n$. Note that Huisken recently shown that the inequality \eqref{mink-R} also holds for outward-minimizing hypersurfaces in $\R^n$.

The proof of Theorem \ref{main-thm} follows a similar argument in \cite{GL,BHW}. We evolve the hypersurface $\Sigma$ by inverse mean curvature flow and define the quantity
\begin{equation*}
    Q(t)=|\Sigma_t|^{-\frac{n-2}{n-1}}\left(\int_{\Sigma_t}fHd\mu_t+2(n-1)m\omega_{n-1}\right)
\end{equation*}
on the flow hypersurfaces $\Sigma_t$, where $|\Sigma_t|$ is the area of $\Sigma_t$. We first show that $Q(t)$ is monotone non-increasing under the inverse mean curvature flow.
\begin{prop}\label{prop-mono}
Under the inverse mean curvature flow \eqref{IMCF}, the quantity $Q(t)$ is monotone non-increasing in $t$.
\end{prop}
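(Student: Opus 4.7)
The plan is to differentiate $Q(t)$ directly, massage $\frac{d}{dt}\int_{\Sigma_t}fHd\mu_t$ into a form that exposes two key cancellations, and then close the estimate with the elementary inequality $|A|^2\geq H^2/(n-1)$.

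Write $A(t)=|\Sigma_t|$ and $I(t)=\int_{\Sigma_t}fHd\mu_t$. Since $\partial_t d\mu_t=d\mu_t$, one has $A'=A$, so
\begin{equation*}
Q'(t)=A^{-\frac{n-2}{n-1}}\left[I'(t)-\tfrac{n-2}{n-1}\bigl(I(t)+2(n-1)m\omega_{n-1}\bigr)\right],
\end{equation*}
and the task reduces to showing $I'\leq \frac{n-2}{n-1}(I+2(n-1)m\omega_{n-1})$. First I would compute $\partial_t f=H^{-1}\langle\bar\nabla f,\nu\rangle$ (since $f$ is ambient and the flow speed is $H^{-1}\nu$) and combine this with the evolution \eqref{evl-H} of $H$ and \eqref{evl-measure} of $d\mu_t$, to obtain
\begin{equation*}
I'=\int_{\Sigma_t}\langle\bar\nabla f,\nu\rangle d\mu_t+\int_{\Sigma_t}f\partial_tH\,d\mu_t+I.
\end{equation*}

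The main technical step is to rewrite the middle integral. Integration by parts applied to $\Delta H/H^2-2|\nabla H|^2/H^3=-\Delta(1/H)$ yields $\int_{\Sigma_t}f(\Delta H/H^2-2|\nabla H|^2/H^3)d\mu_t=-\int_{\Sigma_t}H^{-1}\Delta f\,d\mu_t$, where $\Delta$ is the Laplacian on $\Sigma_t$. The Gauss formula gives
\begin{equation*}
\Delta f=\bar\Delta f-\bar\nabla^2f(\nu,\nu)-H\langle\bar\nabla f,\nu\rangle,
\end{equation*}
and at this point the static equation \eqref{static} combined with $\bar\Delta f=0$ (equation \eqref{lapalcian}) delivers the crucial identity $\bar\nabla^2 f(\nu,\nu)=f\overline{Ric}(\nu,\nu)$. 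Substituting back and combining with the $-f\overline{Ric}(\nu,\nu)/H$ and $-f|A|^2/H$ terms in \eqref{evl-H} produces the clean cancellation
\begin{equation*}
\int_{\Sigma_t}f\,\partial_tH\,d\mu_t=\int_{\Sigma_t}\langle\bar\nabla f,\nu\rangle d\mu_t-\int_{\Sigma_t}\frac{f|A|^2}{H}d\mu_t.
\end{equation*}

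Next I would evaluate $\int_{\Sigma_t}\langle\bar\nabla f,\nu\rangle d\mu_t$ using the divergence theorem on the region $E_t$ bounded by the horizon $\partial M$ and $\Sigma_t$: since $\bar\Delta f=0$ in the interior, $f\equiv 0$ on $\partial M$, and $|\bar\nabla f|^2=f'^2=m^2(n-2)^2\lambda^{2-2n}$ combined with $|\partial M|=s_0^{n-1}\omega_{n-1}$ gives $\int_{\Sigma_t}\langle\bar\nabla f,\nu\rangle d\mu_t=m(n-2)\omega_{n-1}$, independent of $t$. Thus
\begin{equation*}
I'(t)=2m(n-2)\omega_{n-1}+I(t)-\int_{\Sigma_t}\frac{f|A|^2}{H}d\mu_t.
\end{equation*}

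The final step is to apply the pointwise inequality $|A|^2\geq H^2/(n-1)$ (valid since $f>0$ and $H>0$ along the flow) to estimate $\int_{\Sigma_t}f|A|^2/H\,d\mu_t\geq I/(n-1)$, giving $I'\leq 2m(n-2)\omega_{n-1}+\tfrac{n-2}{n-1}I=\tfrac{n-2}{n-1}(I+2(n-1)m\omega_{n-1})$, exactly what is needed. The main obstacle I anticipate is bookkeeping the Ricci/Hessian terms carefully so that the static equation \eqref{static} produces the clean cancellation; everything else is routine. Equality in the umbilic inequality occurs only when $\Sigma_t$ is totally umbilic, which in the Schwarzschild manifold forces $\Sigma_t$ to be a coordinate sphere, yielding the rigidity statement.
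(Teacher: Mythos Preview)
Your proof is correct and follows essentially the same route as the paper: compute $\partial_t(fH\,d\mu_t)$, integrate by parts on $-f\Delta(H^{-1})$, use the static equation \eqref{static} together with $\bar\Delta f=0$ to produce the cancellation $\Delta f+f\overline{Ric}(\nu,\nu)=-H\langle\bar\nabla f,\nu\rangle$, evaluate $\int_{\Sigma_t}\langle\bar\nabla f,\nu\rangle$ via the divergence theorem, and close with $|A|^2\ge H^2/(n-1)$. The only minor difference is organizational: the paper first records the inequality $I'\le\int_{\Sigma_t}\bigl(\tfrac{n-2}{n-1}fH+2\langle\bar\nabla f,\nu\rangle\bigr)d\mu_t$ and then inserts the boundary value, whereas you compute the exact identity $I'=2m(n-2)\omega_{n-1}+I-\int f|A|^2/H$ before applying the umbilic inequality; and in the equality case the paper stops at ``totally umbilical'' for this proposition, deferring the step from umbilicity to coordinate sphere (via Codazzi and the Ricci structure \eqref{expre-Ric}) to the proof of Theorem~\ref{main-thm}.
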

\proof
As the proof of Proposition 5.3 in \cite{BHW}, we first have
\begin{align}\label{eq4-1}
    \frac d{dt}\int_{\Sigma_t}fHd\mu_t\leq&\int_{\Sigma_t}\left(\frac{n-2}{n-1}fH+2\langle\bar{\nabla}f,\nu\rangle\right)d\mu_t,
\end{align}
and equality holds if and only if $\Sigma_t$ is totally umbilical. For convenience of reader, we include the proof of \eqref{eq4-1} here.
\begin{align}
  \frac d{dt}\int_{\Sigma_t}fHd\mu_t= & \int_{\Sigma_t}\left(\pt_tfH+f\pt_tH+fH\right)d\mu_t\nonumber \\
  = & \int_{\Sigma_t}\biggl(\langle\bar{\nabla}f,\nu\rangle-f\Delta\frac 1H-\frac fH|A|^2-\frac fH\overline{Ric}(\nu,\nu)+fH\biggr)d\mu_t\nonumber\\
  \leq & \int_{\Sigma_t}\biggl(\langle\bar{\nabla}f,\nu\rangle-\frac 1H(\Delta f+f\overline{Ric}(\nu,\nu))+\frac{n-2}{n-1}fH\biggr)d\mu_t,\label{eq4-2}
\end{align}
where we used $|A|^2\geq H^2/{(n-1)}$ in the last inequality. Using the identity $\Delta f=\bar{\Delta}f-\bar{\nabla}^2f(\nu,\nu)-H\langle\bar{\nabla}f,\nu\rangle$ and \eqref{lapalcian},\eqref{static}, we have
\begin{equation*}
  \Delta f+f\overline{Ric}(\nu,\nu)=-H\langle\bar{\nabla}f,\nu\rangle.
\end{equation*}
Substituting this into \eqref{eq4-2} gives \eqref{eq4-1}. If equality holds in \eqref{eq4-1}, then $|A|^2=H^2/{(n-1)}$ and $\Sigma_t$ is totally umbilical.

Let $\Omega_t$ denote the region bounded by $\Sigma_t$ and the horizon $\pt M$. Using the divergence theorem and noting that $\bar{\Delta}f=0$, we get
\begin{align*}
    \int_{\Sigma_t}\langle\bar{\nabla}f,\nu\rangle d\mu_t=&\int_{\Omega_t}\bar{\Delta}f dvol+m(n-2)\omega_{n-1}=m(n-2)\omega_{n-1}
\end{align*}
which is a constant. Thus we obtain
\begin{align*}
    \frac d{dt}\left(\int_{\Sigma_t}fHd\mu_t+2(n-1)m\omega_{n-1}\right)\leq&\frac{n-2}{n-1}\left(\int_{\Sigma_t}fHd\mu_t+2(n-1)m\omega_{n-1}\right).
\end{align*}

On the other hand, from the evolution \eqref{evl-measure} of the area element $d\mu_t$, the area of $|\Sigma_t|$ satisfies $\frac d{dt}|\Sigma_t|=|\Sigma_t|$. So we conclude that
\begin{align*}
    \frac d{dt}Q(t)\leq&0,
\end{align*}
equality holds if and only if \eqref{eq4-1} assumes equality and then $\Sigma_t$ is totally umbilical.
\endproof

We next investigate the limit of $Q(t)$ as $t\ra\infty$.
\begin{prop}
We have
\begin{align*}
    \liminf_{t\ra\infty}Q(t)\geq&(n-1)\omega_{n-1}^{\frac 1{n-1}}.
\end{align*}
\end{prop}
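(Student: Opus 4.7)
The plan is to establish the stronger statement $\lim_{t\to\infty} Q(t) = (n-1)\omega_{n-1}^{1/(n-1)}$ by direct asymptotic analysis, which at once gives the $\liminf$ bound. The key observation is that on any coordinate sphere $\{s\}\times\SS^{n-1}$, using $f=\lambda'$, $H=(n-1)\lambda'/\lambda$ and $(\lambda')^2=1-2ms^{2-n}$, one computes
\begin{equation*}
\int_\Sigma fH\,d\mu + 2(n-1)m\omega_{n-1} = (n-1)s^{n-2}\omega_{n-1}, \qquad |\Sigma|^{(n-2)/(n-1)} = s^{n-2}\omega_{n-1}^{(n-2)/(n-1)},
\end{equation*}
so $Q=(n-1)\omega_{n-1}^{1/(n-1)}$ identically, independent of $s$. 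The proposition then amounts to passing this identity to the limit via the convergence of $\Sigma_t$ to a large coordinate sphere established in Section 3.

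Using $d\mu_t=\lambda^{n-1}v\,d\sigma$ and the formula \eqref{expre-H1} for $H$, I would rewrite
\begin{equation*}
\int_{\Sigma_t}fH\,d\mu_t = (n-1)\int_{\SS^{n-1}}(\lambda^{n-2}-2m)\,d\sigma \;-\; \int_{\SS^{n-1}}\lambda'\lambda^{n-2}\tilde\sigma^{ij}\varphi_{ij}\,d\sigma,
\end{equation*}
where I have used $(\lambda')^2=1-2m\lambda^{2-n}$. Adding $2(n-1)m\omega_{n-1}$ cancels the $-2m$ term in the first integral and leaves $(n-1)\int\lambda^{n-2}\,d\sigma$. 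Writing $\lambda=\tilde\lambda\,e^{t/(n-1)}$ with $\tilde\lambda\to\bar\lambda$ uniformly, this equals $(n-1)\bar\lambda^{n-2}\omega_{n-1}\,e^{(n-2)t/(n-1)}(1+o(1))$. For the denominator, $|\Sigma_t|=\int\lambda^{n-1}v\,d\sigma = \bar\lambda^{n-1}\omega_{n-1}\,e^t(1+o(1))$ using in addition $v\to 1$, so $|\Sigma_t|^{(n-2)/(n-1)}=\omega_{n-1}^{(n-2)/(n-1)}\bar\lambda^{n-2}\,e^{(n-2)t/(n-1)}(1+o(1))$. Taking the ratio produces the desired limit.

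The main obstacle is controlling the correction integral $\int_{\SS^{n-1}}\lambda'\lambda^{n-2}\tilde\sigma^{ij}\varphi_{ij}\,d\sigma$, since $|\varphi_{ij}|=O(e^{-\tilde\beta t})$ is multiplied by $\lambda^{n-2}=O(e^{(n-2)t/(n-1)})$ and the product need not be small pointwise. I would handle this by integration by parts on $\SS^{n-1}$: splitting $\tilde\sigma^{ij}=\sigma^{ij}-\varphi^i\varphi^j/v^2$, the piece with $\sigma^{ij}\varphi_{ij}$ transforms, via $D_ir=\lambda D_i\varphi$ and the identities $\lambda''=m(n-2)\lambda^{1-n}$, $(\lambda')^2=1-2m\lambda^{2-n}$, into $-(n-2)\int(\lambda^{n-2}-m)|D\varphi|^2\,d\sigma = O(e^{((n-2)/(n-1)-2\beta)t})$, while the cubic piece $\lambda'\lambda^{n-2}\varphi^i\varphi^j\varphi_{ij}/v^2$ is of still smaller order in the decaying derivatives of $\varphi$. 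Divided by the denominator of size $e^{(n-2)t/(n-1)}$, these corrections decay like $e^{-2\beta t}$ and so are absorbed into $o(1)$, establishing the limit exactly and hence the proposition.
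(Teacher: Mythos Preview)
Your argument is correct and in fact establishes the full limit $\lim_{t\to\infty}Q(t)=(n-1)\omega_{n-1}^{1/(n-1)}$, which is stronger than what the paper states. The approaches differ in emphasis. The paper proceeds by separately approximating each factor: it writes $d\mu_t=\lambda^{n-1}(1+O(e^{-2\beta t}))\,dvol_{\SS^{n-1}}$, $f=1+O(\lambda^{2-n})$, and $\lambda H=(n-1)+O(e^{-\gamma t})$ using the exponential decay of $D\varphi$ and $D^2\varphi$, obtaining $\int_{\Sigma_t}fH\,d\mu_t=(n-1)\int_{\SS^{n-1}}\lambda^{n-2}\,d\sigma\,(1+O(e^{-\gamma t}))$; it then bounds the ratio $\int\lambda^{n-2}\big/\big(\int\lambda^{n-1}\big)^{(n-2)/(n-1)}$ by a sandwich argument based on $\tilde\lambda\to\bar\lambda$.

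Your route instead exploits the exact algebraic identity $fH\,d\mu_t=\big[(n-1)(\lambda')^2\lambda^{n-2}-\lambda'\lambda^{n-2}\tilde\sigma^{ij}\varphi_{ij}\big]\,d\sigma$ together with $(\lambda')^2=1-2m\lambda^{2-n}$, which makes the added constant $2(n-1)m\omega_{n-1}$ cancel \emph{exactly} and explains conceptually why $Q$ is constant on coordinate spheres. This is a nice structural observation. One remark: the integration by parts you perform on the correction term is valid (your computation $D_i(\lambda'\lambda^{n-2})=(n-2)(\lambda^{n-2}-m)\varphi_i$ is correct), but it is not actually needed. Since $|D^2\varphi|=O(e^{-\tilde\beta t})$ is already available from Section~3, the correction integral divided by $|\Sigma_t|^{(n-2)/(n-1)}\sim e^{(n-2)t/(n-1)}$ is $O(e^{-\tilde\beta t})$ directly; this is how the paper handles it. Your integration-by-parts variant has the minor advantage that the leading error is controlled using only the $C^1$ decay $|D\varphi|=O(e^{-\beta t})$, at the cost of a short extra computation.
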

\proof
From the expression \eqref{expre-g} of the metric $g$ and lemma \ref{lem-varphi}, the area element $d\mu_t$ satisfies
\begin{equation*}
    d\mu_t=\lambda^{n-1}(1+O(e^{-2\beta t}))dvol_{\SS^{n-1}},
\end{equation*}
Then we have
\begin{align}
    |\Sigma_t|^{\frac{n-2}{n-1}}=&\left(\int_{\SS^{n-1}}\lambda^{n-1}dvol_{\SS^{n-1}}\right)^{\frac{n-2}{n-1}}\left(1+O(e^{-2\beta t})\right).
\end{align}

On the other hand,
\begin{align*}
    f=\sqrt{1-2m\lambda^{2-n}}=1-m\lambda^{2-n}+O(\lambda^{4-2n}).
\end{align*}
By the expression \eqref{expre-H1} of the mean curvature $H$ and the exponentially decay of $\varphi_i,\varphi_{ij}$,
\begin{align*}
    \lambda H=&\frac{(n-1)\lambda'}{v}-\frac{\sigma^{ij}\varphi_{ij}}{v}+\frac{\varphi^i\varphi^j\varphi_{ij}}{v^3} =n-1+O(e^{-\gamma t})
\end{align*}
for some positive constant $\gamma=\min\{2\beta,\tilde{\beta},\frac{n-2}{n-1}\}$. So we have
\begin{align}
    \int_{\Sigma_t}fHd\mu_t=&(n-1)\int_{\SS^{n-1}}\lambda^{n-2}dvol_{\SS^{n-1}}\left(1+O(e^{-\gamma t})\right).
\end{align}
Then we obtain
\begin{align}
    \liminf_{t\ra\infty}Q(t)\geq&(n-1)\liminf_{t\ra\infty}\frac{\int_{\SS^{n-1}}\lambda^{n-2} dvol_{\SS^{n-1}}}{\left(\int_{\SS^{n-1}}\lambda^{n-1}dvol_{\SS^{n-1}}\right)^{\frac{n-2}{n-1}}}
\end{align}

Since $\tilde{\lambda}=\lambda e^{-\frac t{n-1}}$ converges to a constant $\bar{\lambda}$, there exists a positive function $\epsilon(t)$ such that $\lim_{t\ra\infty}\epsilon(t)=0$ and
\begin{equation*}
 \bar{\lambda}-\epsilon<\tilde{\lambda}<\bar{\lambda}+\epsilon
\end{equation*}
or equivalently
\begin{equation*}
  (\bar{\lambda}-\epsilon)e^{\frac t{n-1}}<\lambda<(\bar{\lambda}+\epsilon)e^{\frac t{n-1}}
\end{equation*}
when $t$ is sufficiently large. So we conclude that
\begin{equation}
  \liminf_{t\ra\infty}Q(t)\geq(n-1)\omega_{n-1}^{\frac 1{n-1}}\liminf_{t\ra\infty}(\frac{\bar{\lambda}-\epsilon}{\bar{\lambda}+\epsilon})^{n-2}=(n-1)\omega_{n-1}^{\frac 1{n-1}},
\end{equation}
which completes the proof.
\endproof
\begin{rem}
Note that in \cite{BHW}, Beckner's sharp Sobolev inequality on sphere plays a crucial role in estimating the monotone quantity $Q(t)$, while here we can avoid this due to the good long time behavior of the inverse mean curvature flow in Schwarzchild manifold.
\end{rem}

Now we can finish the proof of Theorem \ref{main-thm}. Since $Q(t)$ is monotone non-increasing in time $t$, we have
\begin{equation*}
  Q(0)\geq \liminf_{t\ra\infty}Q(t)\geq(n-1)\omega_{n-1}^{\frac 1{n-1}}.
\end{equation*}
Thus we obtain
\begin{equation*}
  \int_{\Sigma_t}fHd\mu_t+2(n-1)m\omega_{n-1}\geq  (n-1)\omega_{n-1}^{\frac 1{n-1}}|\Sigma_t|^{\frac{n-2}{n-1}}
\end{equation*}
which is equivalent to \eqref{main-inequ}. If the equality holds in \eqref{main-inequ}, then $Q(t)$ is a constant in $t$. From the proof of Proposition \ref{prop-mono}, the hypersurface $\Sigma_0$ is totally umbilical. It follows from the Codazzi equations that $\overline{Ric}(\nu,e_i)=0$ for any tangent vector fields $e_i$. Since $m>0$, the expression \eqref{expre-Ric} of Ricci curvature implies that the radial vector $\pt_r$ is either parallel or orthogonal to the unit normal vector $\nu$ of $\Sigma_0$. By the star-shapedness of $\Sigma_0$, $\pt_r$ is parallel to $\nu$ at each point of $\Sigma_0$ and then $\Sigma_0$ is a coordinate sphere $\{s\}\times \SS^{n-1}$. This completes the proof of Theorem \ref{main-thm}.

\vskip 2mm
Finally we show that the inequality \eqref{main-inequ} is also true for star-shaped and weakly mean convex (i.e., $H\geq 0$) hypersurface $\Sigma$, as we can approximate such $\Sigma$ by a family of star-shaped and strictly mean convex hypersurfaces $\Sigma_{\epsilon}, 0<\epsilon<\epsilon_0$ and then the conclusion follows from the approximation process. To this end, we need the following approximation lemma
\begin{lem}\label{claim-0}
Suppose that $\Sigma_0$ is a star-shaped and weakly mean-convex hypersurface in Schwarzchild manifold $(M, \bar{g})$, then we can approximate it by a family of star-shaped and strictly mean-convex hypersurfaces $\Sigma_{\epsilon}, 0<\epsilon<\epsilon_0$.
\end{lem}
\begin{proof}
Since $\Sigma_0$ is weakly mean-convex, we have that the mean curvature $H\geq 0$ on $\Sigma_0$. We solve the mean curvature flow $X: \Sigma\times [0, \epsilon_0)\ra (M, \bar{g})$,
\begin{equation}\label{MCF-Sch}
  \pt_tX=-H\nu,
\end{equation}
with $\Sigma_0=X(\Sigma,0)$ as the initial data. Then the mean curvature $H$ evolves by (see \cite{Hu-86})
\begin{equation}\label{evl-H-MCF-Sch}
  \pt_tH=\Delta H+H(|A|^2+\overline{Ric}(\nu,\nu)).
\end{equation}
Then in view of the maximum principle, \eqref{evl-H-MCF-Sch} implies that $H\geq 0$ for all $\Sigma_t, t\in [0, \epsilon_0)$. Suppose $H(x_0,t_0)=0$ at some interior point $x_0\in \Sigma_{t_0}, 0<t_0<\epsilon_0$, the strong maximum principle implies that $H(x,t)\equiv 0$ for $(x,t)\in \Sigma_t, t\in [0,t_0)$. However, since $\Sigma_t$ is a closed hypersurface in $(M, \bar{g})$,  there exists at least one elliptic point on each $\Sigma_t$ (see e.g.,\cite[Lemma 2.1]{LiWX-2}). Thus there is at least one point $x\in \Sigma_t$ such that $H(x,t)>0$, which is a contradiction. So we conclude that $H>0$ for all $\Sigma_t, t\in (0,\epsilon_0)$. Since star-shaped is an open condition, we have that $\Sigma_{\epsilon}$ is also star-shaped for $0<\epsilon<\epsilon_0$ provided that $\epsilon_0$ is sufficiently small.
\end{proof}

Suppose $\Sigma$ is a star-shaped and weakly mean convex hypersurface in $(M, \bar{g}_0)$, then lemma \ref{claim-0} gives a family of star-shaped and strictly mean-convex hypersurfaces $\Sigma_{\epsilon}, 0<\epsilon<\epsilon_0$ with $\Sigma_{\epsilon}\ra \Sigma$ smoothly as $\epsilon\ra 0$. By Theorem \ref{main-thm}, the inequality \eqref{main-inequ} holds for each $\Sigma_{\epsilon}$. Therefore by letting $\epsilon\ra 0$, we conclude that \eqref{main-inequ} also holds for $\Sigma$.
\begin{rem}
A similar argument implies that the main inequality in \cite[Theorem 1]{BHW} also holds for star-shaped and weakly mean convex hypersurface in anti-deSitter-Schwarzchild manifold.
\end{rem}

\section{Inverse mean curvature flow in Kottler space}\label{sec:H^n}

In this section, we consider the inverse mean curvature flow in the Kotter-Schwarzchild manifold $(M_{\kappa}, \bar{g}_{\kappa})$. The existence and convergence result of inverse mean curvature flow in $(M_{\kappa}, \bar{g}_{\kappa})$ with star-shaped and strictly mean convex initial hypersurface has been studied in \cite{BHW, GWWX} (see also \cite{Ge,Di} for $\H^n$ case). We will show that star-shaped and weakly mean convex initial hypersurface has a global smooth solution of strictly mean convex to the inverse mean curvature flow for all time $t>0$. Firstly, we prove the following lower bound for the mean curvature, independently of the initial mean curvature.

\begin{lem}\label{lem-Hn-1}
Suppose that $X: \Sigma\times [0,T)\ra (M_{\kappa}, \bar{g}_{\kappa})$ is a smooth star-shaped solution to the inverse mean curvature flow \eqref{icf} in the Kottler space with nonnegative mass $m\geq 0$ and the support function $\chi=\langle \lambda_{\kappa}\pt_r, \nu\rangle$ of the initial hypersurface $\Sigma_0=X_0(\Sigma)$ satisfies
\begin{equation}\label{lem-Hn-cond1}
  0<R_1\leq \chi\leq R_2.
\end{equation}
Then we have the following estimate on the mean curvature $H$ of $\Sigma_t, t\in [0,T)$.
\begin{equation}\label{H-sharp-bd}
 \min_{\Sigma_t}H\geq   e^{-\frac 1{n-1}}(n-1)^{\frac 12}R_1R_2^{-1}\min\{\frac 1{\sqrt{2}}t^{\frac 12},1\}.
\end{equation}
\end{lem}
\proof
From the expression \eqref{expre-Ric-kappa} for Ricci tensor of $(M_{\kappa}, \bar{g}_{\kappa})$, we have
\begin{align*}
 & \overline{Ric}(\nu,\nu)=m(n-2)\lambda_{\kappa}^{-n}-(n-1)-mn(n-2)\lambda_{\kappa}^{-n}|\pt_r^{\perp}|^2\\
&\overline{Ric}(\nu,\pt_k)\langle\lambda_{\kappa}\pt_r,\pt_j\rangle g^{kj}=-mn(n-2)\lambda_{\kappa}^{-n}\chi|\pt_r^{\top}|^2,
\end{align*}
Then by lemma \ref{lem-evolu} and remark \ref{rem-3-1}, we have
\begin{align}
\pt_tH=&\frac{\Delta H}{H^2}-2\frac{|\nabla H|^2}{H^3}-\frac{|A|^2}H\nonumber\\
&\quad -\frac 1H\biggl(-n+1+m(n-2)\lambda_{\kappa}^{-n}-mn(n-2)\lambda_{\kappa}^{-n}|\pt_r^{\perp}|^2\biggr)\label{evl-H-Hn}\\
  \pt_t\chi=&\frac 1{H^2}\Delta \chi+\frac{|A|^2}{H^2}\chi+\frac 1{H^2}mn(n-2)\lambda_{\kappa}^{-n}|\pt_r^{\top}|^2\chi.\label{evl-chi-Hn}
\end{align}
Under the condition \eqref{lem-Hn-cond1}, from lemma \ref{lem-star-shape} and remark \ref{rem-3-2} we have
\begin{equation}\label{star-shape-Hn}
  R_1e^{\frac t{n-1}}\leq \chi\leq \lambda_{\kappa}(r)\leq R_2e^{\frac t{n-1}}.
\end{equation}
Define $u=(H\chi)^{-1}$. Since we have estimated the bounds of $\chi$ in \eqref{star-shape-Hn}, to estimate the lower bound of $H$ it suffices to estimate the upper bound of $u$. Combining the equations \eqref{evl-H-Hn} and \eqref{evl-chi-Hn}, we can derive the evolution equation of $u$:
\begin{align}\label{evl-u1}
  \pt_tu =& -H^{-2}\chi^{-1}\pt_tH-H^{-1}\chi^{-2}\pt_t\chi \nonumber\\
  =&-H^{-2}\chi^{-1}\left( \frac{\Delta H}{H^2}-2\frac{|\nabla H|^2}{H^3}-\frac{|A|^2}H\right)\nonumber\\
  &+H^{-3}\chi^{-1}\left(m(n-2)\lambda_{\kappa}^{-n}-(n-1)-mn(n-2)\lambda_{\kappa}^{-n}|\pt_r^{\perp}|^2\right)\nonumber\\
  & -H^{-1}\chi^{-2}\left(\frac 1{H^2}\Delta \chi+\frac{|A|^2}{H^2}\chi\right)-H^{-3}\chi^{-1}mn(n-2)\lambda_{\kappa}^{-n}|\pt_r^{\top}|^2\nonumber\\
  = &H^{-2}(\chi^{-1}\Delta H^{-1}+H^{-1}\Delta \chi^{-1})-2H^{-3}\chi^{-3}|\nabla \chi|^2\nonumber\\
  &-(n-1)H^{-3}\chi^{-1}-m(n-1)(n-2)\lambda_{\kappa}^{-n}H^{-3}\chi^{-1}\nonumber\\
  \leq &~ \textrm{div}(H^{-2}\nabla u)-2H^{-2}u^{-1}|\nabla u|^2-(n-1)u^3\chi^2,
\end{align}
where we used $m\geq 0$ and $ |\pt_r^{\perp}|^2+|\pt_r^{\top}|^2=|\pt_r|^2=1$.  To estimate the upper bound of $u$, let
\begin{equation}
  v=(t-t_0)^{\frac 12}u,
\end{equation}
where $t_0>0$ is arbitrary but fixed. Then $v(x, t_0)\equiv 0$ on $\Sigma_{t_0}$. From the evolution equation \eqref{evl-u1} for $u$, we derive that
\begin{align}\label{evl-v1}
  \pt_tv\leq &~ \textrm{div}(H^{-2}\nabla v)-2H^{-2}v^{-1}|\nabla v|^2+\frac 12(t-t_0)^{-1}v\nonumber\\
  &\quad -(n-1)(t-t_0)^{-1}v^3\chi^2.
\end{align}
Let $v_k=\max(v-k,0)$ for $k\geq 0$ and let $A(k)=\{x\in\Sigma_t, v(x,t)>k\}$. Then by \eqref{evl-measure} and \eqref{evl-u1},
\begin{align*}
  \frac d{dt}\int_{\Sigma_t}v_k^2d\mu_t\leq & \int_{A(k)}2v_k\nabla^i(H^{-2}\nabla_i v)d\mu_t-4\int_{A(k)}H^{-2}v_kv^{-1}|\nabla v|^2d\mu_t\\
  &+(t-t_0)^{-1}\int_{A(k)}vv_kd\mu_t +\int_{A(k)}v_k^2d\mu_t\\
  &-2(n-1)(t-t_0)^{-1}\int_{A(k)}v^3v_k\chi^2d\mu_t\\
\leq & (t-t_0)^{-1}\int_{A(k)}vv_kd\mu_t+\int_{A(k)}v_k^2d\mu_t\\
   &-2(n-1)(t-t_0)^{-1}\int_{A(k)}v^3v_k\chi^2d\mu_t.
\end{align*}
Using $v>k$ on $A(k)$ and $\chi\geq R_1e^{\frac t{n-1}}$, we have
\begin{align*}
  \frac d{dt}\int_{\Sigma_t}v_k^2d\mu_t\leq
   & (t-t_0)^{-1}\int_{A(k)}vv_kd\mu_t+\int_{A(k)}v_k^2d\mu_t\\
   &-2(n-1)k^2(t-t_0)^{-1}R_1^2e^{\frac{2t}{n-1}}\int_{A(k)}vv_kd\mu_t
\end{align*}
If for $0< t_0<t_1$,
\begin{equation}\label{cond-k}
  k^2\geq k_0^2=\frac 1{n-1}R_1^{-2}e^{-\frac {2t_0}{n-1}}\max\{t_1-t_0,1\},
\end{equation}
then on the interval $[t_0,t_1]$, we have
\begin{align}
  \frac d{dt}\int_{\Sigma_t}v_k^2d\mu_t\leq & 0.
\end{align}
Note that by the definition $v_k(x, t_0)\equiv 0$, then the above inequality implies that $v_k(x, t)\equiv 0$ for all $t\in [t_0, t_1]$, provided that $k$ satisfies \eqref{cond-k}.

We now consider the small and large times separately: If $t_1\leq 2$, we choose $t_0=t_1/2\leq 1$ and then
 \begin{equation*}
   k_0^2=\frac 1{n-1}R_1^{-2}e^{-\frac {2t_0}{n-1}}.
 \end{equation*}
From the definition of $v_k$, we conclude that
\begin{equation*}
  \sup_{\Sigma_{t_1}}u\leq t_0^{-\frac 12}(n-1)^{-\frac 12}R_1^{-1}e^{-\frac{t_0}{n-1}}\leq e^{\frac 1{n-1}}\sqrt{2}t_1^{-\frac 12}(n-1)^{-\frac 12}R_1^{-1}e^{-\frac{t_1}{n-1}},
\end{equation*}
since $t_0\leq 1$. If $t_1\geq 2$, we choose $t_0=t_1-1\geq 1$ and we still have
 \begin{equation*}
   k_0^2=\frac 1{n-1}R_1^{-2}e^{-\frac {2t_0}{n-1}}.
 \end{equation*}
In this case,
 \begin{equation*}
  \sup_{\Sigma_{t_1}}u\leq k_0=(n-1)^{-\frac 12}R_1^{-1}e^{-\frac{t_0}{n-1}}\leq e^{\frac 1{n-1}}(n-1)^{-\frac 12}R_1^{-1}e^{-\frac{t_1}{n-1}}.
\end{equation*}

In summary, in any case we have
 \begin{equation}
  \sup_{\Sigma_{t}}u\leq e^{\frac 1{n-1}}(n-1)^{-\frac 12}R_1^{-1}e^{-\frac{t}{n-1}}\max\{\sqrt{2}t^{-\frac 12},1\}.
\end{equation}
Then from the definition $u=(H\chi)^{-1}$ and \eqref{star-shape-Hn}, we obtain
\begin{align*}
  \min_{\Sigma_t}H\geq  & e^{-\frac 1{n-1}}(n-1)^{\frac 12}R_1R_2^{-1}\min\{\frac 1{\sqrt{2}}t^{\frac 12},1\}.
\end{align*}

\endproof
\begin{rem}
Note that the lower bound \eqref{H-sharp-bd} of $H$ is independent of the initial mean curvature, while the lower bound of $H$ in \cite{BHW,GWWX} depends on the mean curvature $H$ of the initial hypersurface.
\end{rem}

\begin{rem}
Here we only proved the estimate \eqref{H-sharp-bd} for inverse mean curvature flow in Kottler space with nonnegative mass. In the case that the Kottler space has negative mass $m<0$, one can easily check that $s_{\kappa}\geq (-m(n-2))^{1/n}$. Since we always have $\lambda_{\kappa}(r)\geq s_{\kappa}$, the two terms in the sixth line of \eqref{evl-u1} can be estimated as
\begin{align*}
  &-(n-1)H^{-3}\chi^{-1}-m(n-1)(n-2)\lambda_{\kappa}^{-n}H^{-3}\chi^{-1}\\
   \leq& -(n-1)H^{-3}\chi^{-1}-m(n-1)(n-2)s_{\kappa}^{-n}H^{-3}\chi^{-1} ~ \leq ~ 0.
\end{align*}
One can hope to explore the negative gradient term in \eqref{evl-u1} to estimate the upper bound of $u$ as Huisken-Ilmanen did in \cite{HI08}. If the initial hypersurface satisfies a slightly stronger condition $\lambda_{\kappa}(r)\geq 2^{1/n}s_{\kappa}$, then
\begin{align*}
  &-(n-1)H^{-3}\chi^{-1}-m(n-1)(n-2)\lambda_{\kappa}^{-n}H^{-3}\chi^{-1}\\
   \leq& -(n-1)H^{-3}\chi^{-1}-\frac 12 m(n-1)(n-2)s_{\kappa}^{-n}H^{-3}\chi^{-1} \\
  \leq & -\frac 12(n-1)H^{-3}\chi^{-1}.
\end{align*}
So we have enough negative term in the evolution equation of \eqref{evl-u1} and the remaining part in the proof of lemma \ref{lem-Hn-1} can be followed as well. Thus in this case, the estimate \eqref{H-sharp-bd} also holds with a slightly adjusted constant on the right hand side.
\end{rem}

Suppose $X: \Sigma\times [0,T)\ra (M_{\kappa}, \bar{g}_{\kappa})$ is a smooth solution to the inverse mean curvature flow which is star-shaped and the support function $\chi$ of the initial data $\Sigma_0$ satisfies \eqref{lem-Hn-cond1}. Then \eqref{H-sharp-bd} gives a lower bound of $H$ on all $\Sigma_t, t\in [0,T)$, independent on the initial mean curvature. In view of the evolution equation of $H$ in \eqref{evl-H-Hn}, the mean curvature is uniformly bounded above by its initial bound $\sup_{\Sigma_0}H$. Then as in \cite{BHW,GWWX}, we have that the full second fundamental form $A$ is uniformly bounded on $\Sigma_t, t\in [T/2,T)$. If $T<\infty$, the regularity results of Krylov \cite{Kry} gives the higher regularity of the solution and convergence to smooth limit hypersurface $\Sigma_T$ as $t\ra T$.  Then the short time existence result can be used to extend the solution smoothly past the time $T$. So we have $T=\infty$ and the solution exists for all time $[0, \infty)$.

To prove Theorem \ref{thm-H-geq0}, we need the following approximation lemma
\begin{lem}\label{lem-approx}
Let $X_0: \Sigma \ra (M_{\kappa}, \bar{g}_{\kappa})$ be a closed oriented hypersurface immersion of class $C^1$, with measurable, bounded and nonnegative weak mean curvature. Then $\Sigma_0=X_0(\Sigma)$ is of class $C^{1,\beta}\cap W^{2,p}$ for all $0<\beta<1, 1 \leq p < \infty$, and can be approximated locally uniformly in $C^{1,\beta}\cap W^{2,p}$ by a family of smooth hypersurfaces $\tilde{\Sigma}_{\epsilon}$, $0<\epsilon<\epsilon_0$, satisfying $H > 0$.
\end{lem}
\begin{proof}
The proof is similarly as that in \cite[Lemma 2.6]{HI08}, except that we are using the mean curvature flow in $(M_{\kappa}, \bar{g}_{\kappa})$ and the evolution equations of the mean curvature $H$ and the second fundamental form $|A|^2$ are (see \cite{Hu-86})
\begin{align*}
  \pt_tH= &\Delta H+(|A|^2+\overline{Ric}(\nu,\nu))H \\
  \pt_t|A|^2= &\Delta |A|^2-2|\nabla A|^2+2(|A|^2+\overline{Ric}(\nu,\nu))|A|^2\\
  &+4h^{ij}h_j^{m}\bar{R}_{mli}^{\quad \,\,l}-4h^{ij}h^{lm}\bar{R}_{milj}+2h^{ij}(\bar{\nabla}_j\bar{R}_{0li}^{\quad l}+\bar{\nabla}_l\bar{R}_{0ij}^{\quad l})
\end{align*}
The additional curvature terms in the above evolution equations are of lower order, so the argument in \cite[Lemma 2.6]{HI08} can be carried over directly and we omit the details.
\end{proof}

We now finish the proof of Theorem \ref{thm-H-geq0}. Given $\Sigma_0$, the initial hypersurface in Theorem \ref{thm-H-geq0},  let $\tilde{\Sigma}_{\epsilon}$ be the family of approximated hypersurfaces of positive mean curvature in Lemma \ref{lem-approx}. Starting from each $\tilde{\Sigma}_{\epsilon}, 0<\epsilon<\epsilon_0$, \eqref{icf} has a global smooth solution for all time $[0, \infty)$, all the estimates in Lemma \ref{lem-Hn-1} and the higher regularity estimates are uniform in $\epsilon$ for each positive fixed $t>0$. By letting $\epsilon\ra 0$ we obtain the desired global solution of \eqref{icf} for all $0<t<\infty$. This solution approaches the initial data $\Sigma_0$ uniformly as $t\ra 0$ in view of the estimate on $H$ in Lemma \ref{lem-Hn-1} and the fact that $\Sigma_t$ can be written as a graph of bounded gradient over $\Sigma_0$.


\bibliographystyle{Plain}

\end{document}